\newtheorem{theorem}{Theorem}
\newtheorem{corollary}[theorem]{Corollary}
\newtheorem{proposition}[theorem]{Proposition}
\newtheorem{lemma}[theorem]{Lemma}
\newcommand{\eps}{\mathbin{\e}}
\newcommand{\e}{\varepsilon}
\newcommand{\catMod}[1]{\mathbf{#1{-}Mod}}	% category of locally convex modules
\newcommand{\catLCS}{\mathbf{LCS}}	% category of locally convex spaces
\newcommand{\catLCMod}[1]{\mathbf{#1{-}LCMod}}	% category of locally convex modules
\newcommand{\catLCModt}[2]{\mathbf{#1{-}LCMod{-}#2}}	% category of locally convex modules with certain multiplication
\newcommand{\catVB}{\mathbf{VB}}	% category of vector bundles
\newcommand{\cL}{\mathcal{L}}
\newcommand{\bD}{\mathbb{D}}
\newcommand{\bK}{\mathbb{K}}
\newcommand{\bR}{\mathbb{R}}
\newcommand{\bC}{\mathbb{C}}
\newcommand{\bZ}{\mathbb{Z}}
\newcommand{\cH}{\mathcal{H}}
\newcommand{\cD}{\mathcal{D}}
\newcommand{\cK}{\mathcal{K}}
\newcommand{\cM}{\mathcal{M}}
\newcommand{\sA}{\mathscr{A}}
\newcommand{\sB}{\mathscr{B}}
\newcommand{\sC}{\mathscr{C}}
\DeclareMathOperator{\id}{id}
\DeclareMathOperator{\Vol}{Vol}
\DeclareMathOperator{\acx}{acx}
\DeclareMathOperator{\cacx}{\overline{\acx}}
\newcommand{\coleq}{\coloneqq}
\author{E.~A.~Nigsch\footnote{Wolfgang Pauli Institute, Oskar-Morgenstern-Platz 1, 1090 Vienna, Austria. E-Mail: eduard.nigsch@univie.ac.at, Phone: +43 4277 50760}}
\title{On regularization of vector distributions\\ on manifolds}
\begin{document}

\maketitle

\textbf{Abstract. }One can represent Schwartz distributions with values in a vector bundle $E$ by smooth sections of $E$ with distributional coefficients. Moreover, any linear continuous operator which maps $E$-valued distributions to smooth sections of another vector bundle $F$ can be represented by sections of the external tensor product $E^* \boxtimes F$ with coefficients in the space $\cL(\cD', C^\infty)$ of operators from scalar distributions to scalar smooth functions. We establish these isomorphisms topologically, i.e., in the category of locally convex modules, using category theoretic formalism in conjunction with L.~Schwartz' notion of $\eps$-product.

\textbf{Keywords. } Vector valued distributions, Distributions on manifolds, Topological tensor product, Regularization.

\textbf{2010 Mathematics Subject Classification. } Primary 46T30; secondary 46A32.

\section{Introduction and Preliminaries}

Our aim is to show, given any vector bundles $E \to M$ and $F \to N$, the isomorphisms
\begin{gather}
\cD'(M, E) \cong \Gamma(M, E) \otimes_{C^\infty(M)} \cD'(M) \cong \cL_{C^\infty(M)} \bigl( \Gamma(M, E^*), \cD'(M) \bigr) \label{iso1} \\
\begin{aligned}
\cL \bigl( \cD'(M, E), \Gamma(N, F) \bigr) & \cong \Gamma ( M \times N, E^* \boxtimes F ) \otimes_{C^\infty(M \times N)} \cL \bigl( \cD'(M), C^\infty(N) \bigr) \\
&\cong \cL_{C^\infty(M \times N)} \bigl( \Gamma(M \times N, E \boxtimes F^*), \cL ( \cD'(M), C^\infty(N) ) \bigr)
\end{aligned}
 \label{iso2}
\end{gather}
in the category of locally convex modules (see Section \ref{sec_preliminaries} for notation). 

\eqref{iso1} is fundamental for the the extension of L.~Schwartz' theory of distributions to the case of distributions on manifolds with values in vector bundles. In fact, it enables one to view distributional sections as smooth sections with distributional coefficients, and hence allows their description by local coordinates. Naturally, it is desirable to establish that this is a topological isomorphism, for instance in order to obtain convergence of a sequence in $\cD'(M,E)$ from convergence of its coordinates. 

The motivation to consider \eqref{iso2} comes from the field of nonlinear generalized functions (or Colombeau algebras). Such algebras, containing distributions as a vector subspace and smooth functions as a faithful subalgebra (whilst having optimal properties in light of the Schwartz impossibility result about multiplication of distributions \cite{Schwartz}), are commonly constructed by representing Schwartz distributions by families of smooth functions, which amounts to regularizing them in a particular way (cf.~\cite{papernew}). Although this is straightforward in the scalar case, the construction of a (diffeomorphism invariant) algebra of generalized tensor fields is considerably more complicated (cf.~\cite{global,global2}). The construction in \cite{global2} involves the ingredients $\Gamma(M \times M, E^* \boxtimes E)$ and $\cL(\cD'(M), C^\infty(M))$ (the latter albeit only in disguise) in order to regularize distributions in $\cD'(M,E)$ in a coordinate-independent manner, but it was not exploited there that any linear continuous mapping $\cD'(M, E) \to \Gamma(M, E)$ necessarily is of the form exhibited by \eqref{iso2}. The spaces of so-called smoothing operators $\cL(\cD'(M), C^\infty(M))$ (scalar case) and $\cL(\cD'(M, E), \Gamma(M, E))$ (vector valued case) were found to be the optimal starting points for a general, geometric construction of Colombeau algebras (\cite{papernew}; cf.~also \cite{found,global,gdnew}). Hence, it is desirable to obtain isomorphism \eqref{iso2} in a topological setting for two reasons: first, it allows to relate the construction of \cite{global2} to the new, more natural approach to Colombeau algebras given in \cite{papernew}; and second, it allows to split the regularization of vector valued distributions into a smooth vectorial part $\Gamma(M \times N, E^* \boxtimes F)$ and a regularizing part $\cL(\cD'(M), C^\infty(N))$. This splitting is expected to be of essential practical importance in the further development of spaces of nonlinear generalized sections applicable to problems of nonlinear distributional geometry (\cite{foundgeom,genpseudo,genrel}).

It is evident that the above isomorphisms are straightforward to obtain on the algebraic level by reduction to the trivial line bundles (see Section \ref{sec_reduction}). On the topological level, however, they require the proper handling of topologies on modules and their tensor products as well as related spaces of linear mappings. Here we draw on concepts of A.~Grothendieck and L.~Schwartz concerning topological tensor products and the theory of vector valued distributions (\cite{zbMATH03199982,FDVV,zbMATH03145498,zbMATH03145499}): first, we use the idea of endowing the tensor product $\cH \otimes \cK$ with a topology such that linear continuous mappings on it correspond to bilinear mappings on $\cH \times \cK$ which are hypocontinuous with respect to certain families of bounded sets; second, we employ the notion of $\eps$-product; and third, a key element of our proof (Lemma \ref{innsbruck} below) may be regarded as an application of L.~Schwartz' \textit{Th\'eor\`eme de croisement}, a cornerstone of his theory of vector valued distributions.

We remark that a version of \eqref{iso1} in the bornological setting (i.e., using the bornological tensor product) was obtained in \cite{sectop}.

\section{Preliminaries}\label{sec_preliminaries}

Let the field $\bK$ be fixed as $\bR$ or $\bC$ throughout. All locally convex spaces will be assumed to be Hausdorff and over $\bK$. For two locally convex spaces $\cH$ and $\cK$ we denote by $\cL(\cH,\cK)$ the space of continuous linear mappings from $\cH$ to $\cK$. Endowed with the topology of simple (or pointwise) convergence it will be denoted by $\cL_\sigma (\cH, \cK)$ and if it carries the topology of bounded convergence by $\cL_\beta(\cH, \cK)$ (\cite[Chapter III, \S 3, p.~81]{Schaefer}).

By $\cH \otimes_\lambda \cK$ for $\lambda \in \{ \beta, \iota \} $ we denote the algebraic tensor product $\cH \otimes \cK$ endowed with the finest locally convex topology such that the canonical mapping $\otimes \colon \cH \times \cK \to \cH \otimes \cK$ is $\lambda$-continuous, which means separately continuous in case $\lambda = \iota$ and hypocontinuous in case $\lambda = \beta$ (cf.~\cite[p.~10]{zbMATH03145499}); when we say hypocontinuous, if not specified otherwise we always mean this with respect to the families of bounded subsets of the respective spaces. There would be more possible choices for $\lambda$ but we will not need these here. Note that in both cases $\cH \otimes_\lambda \cK$ is Hausdorff.

$\cH \otimes_\lambda \cK$ is the unique locally convex space (up to isomorphism) with the following universal property: for each $\lambda$-continuous bilinear mapping from $\cH \times \cK$ into any locally convex space $\cM$ there exists a unique continuous mapping $\tilde f \colon \cH \otimes_\lambda \cK \to \cM$ such that $f = \tilde f \circ \otimes$. This correspondence defines a linear isomorphism between the vector spaces of all $\lambda$-continuous bilinear mappings $\cH \times \cK \to \cM$ and $\cL ( \cH \otimes_\lambda \cK, \cM)$ (\cite[p.~10]{zbMATH03145499}).

Let $\cH_i, \cK_i, \cM_i$ be locally convex spaces and $f_i \in \cL(\cH_i, \cK_i)$ for $i=1,2$. Then $f_1 \otimes f_2 \in \cL(\cH_1 \otimes_\lambda \cH_2, \cK_1 \otimes_\lambda \cK_2)$ for $\lambda \in \{ \beta, \iota \}$ (\cite[p.~14]{zbMATH03145499}) and for $g_i \in \cL(\cK_i, \cM_i)$ ($i=1,2$) we have $(g_1 \otimes g_2) \circ (f_1 \otimes f_2)= (g_1 \circ f_1 ) \otimes (g_2 \circ f_2)$, which turns $\otimes_\lambda$ into a functor $\catLCS \times \catLCS \to \catLCS$ (see below).

All manifolds will be assumed to be smooth, Hausdorff, second countable and finite dimensional. Given a manifold $M$ and a vector bundle $E \to M$ we denote by $C^\infty(M)$, $C^\infty_c(M)$, $\Gamma(M, E)$ and $\Gamma_c(M,E)$ the spaces of smooth functions $M \to \bK$, compactly supported smooth functions $M \to \bK$, smooth section of $E$ and compactly supported smooth sections of $E$, respectively. $C^\infty(M)$ and $\Gamma(M,E)$ carry their usual Fr\'echet topology (\cite[Chapter XVII, Section 2, p.~236]{zbMATH03423321}) and $\Gamma_c(M,E)$ the corresponding (LF)-topology. Writing $\Vol(M)$ for the volume bundle of $M$ (\cite[Chapter 16, p.~429]{Lee}) and $E^*$ for the dual bundle of $E$, the spaces of scalar and $E$-valued distributions on $M$ are defined as the dual spaces $\cD'(M) \coleq \Gamma_c\bigl(M, \Vol(M)\bigr)'$ and $\cD'(M, E) \coleq \Gamma_c\bigl(M, E^* \otimes \Vol(M)\bigr)'$, respectively, both endowed with the strong dual topology (\cite[Definition 3.1.4, p.~231]{GKOS}). Finally, $E \boxtimes F$ denotes the external tensor product of two vector bundles $E$ and $F$ (\cite[Chapter II, Problem 4, p.~84]{GHV}).

We will employ some notions from category theory, using \cite{zbMATH00626734,zbMATH05273392} for general background reference. Given a category $\sC$ and any two of its objects, $A$ and $B$, the set of morphisms from $A$ to $B$ will be denoted by $\sC (A, B)$. We will employ the following categories: $\catVB_M$, the category of smooth vector bundles over a fixed manifold $M$ with morphisms given by smooth vector bundle homomorphisms covering the identity mapping of $M$; $\catLCS$, the category of locally convex spaces with morphisms given by continuous linear mappings; $\catMod A$, the category of $A$-modules with morphisms given by $A$-linear mappings; and $\catLCMod A$, the category of locally convex $A$-modules with morphisms given by $A$-linear continuous mappings, as defined in Section \ref{sec_lcm}.

We will need certain functors to commute with coproducts. This will be obtained very easily in our setting because the categories and functors we are dealing with are \emph{additive}. We recall the relevant definitions from \cite{zbMATH05273392}:
a \emph{preadditive category} is a category $\sC$ together with an abelian group structure on each set $\sC(A, B)$ of morphisms such that the composition mappings $\sC (A, B) \times \sC (B, C) \to \sC (A, C)$, $(f,g) \mapsto g \circ f$ are group homomorphisms in each variable. A convenient feature of preadditive categories is that finite coproducts and finite products are the same objects (\cite[Proposition 1.2.4, p.~4]{zbMATH05273392}) and hence are called \emph{biproducts}. An \emph{additive category} then is a preadditive category with a zero object and such that all finite biproducts exist. A functor $F \colon \sA \to \sB$ between two preadditive categories is called \emph{additive} if for all objects $A,A'$ in $\sA$, the mapping
\[ F \colon \sA(A, A') \to \sB \bigl(F(A), F(A')\bigr),\ f \mapsto F(f) \]
is a group homomorphism. Most importantly, a functor is additive if and only if it preserves biproducts (\cite[Proposition 1.3.4, p.~9]{zbMATH05273392}).

Concerning our setting it is easy to see that the categories $\catVB_M$ and $\catLCS$ %and 
%and $\catLCMod{A}$ are
are additive; %\footnote{see Rem.~3}
moreover, the functors $\textunderscore^* \colon \catVB_M \to \catVB_M$ (dual bundle), $\otimes \colon \catVB_M \times \catVB_M \to \catVB_M$ (tensor product of vector bundles) and $\boxtimes \colon \catVB_M \times \catVB_N \to \catVB_{M \times N}$ (external tensor product) as well as $\otimes_{\lambda} \colon \catLCS \times \catLCS \to \catLCS$ are additive. %\footnote{Rem.~4}
We omit the detailed proofs here because they amount to routine verification of well-known properties.

\section{Locally convex modules}\label{sec_lcm}

In this sections we are going to recall some needed definitions and properties of locally convex algebras as well as locally convex modules and their tensor products (see \cite{zbMATH03426281,MR1093462,MR857807} for additional information).

A \emph{locally convex algebra} $A$ is a locally convex space together with a separately continuous multiplication $A \times A \to A$ turning it into an associative commutative unitary algebra over $\bK$. Given a locally convex algebra $A$, a \emph{locally convex $A$-module} $\cH$ is a locally convex space which is an $A$-module such that module multiplication $A \times \cH \to \cH$ is separately continuous. For fixed $A$, the locally convex $A$-modules whose multiplication is $\lambda$-continuous (with $\lambda \in \{\iota, \beta\}$ as before) are the objects of an additive category $\catLCModt{A}{\lambda}$ whose morphisms are continuous $A$-linear mappings and whose biproducts are formed in $\catLCS$.%\footnote{see Rem.~3}
We simply write $\catLCMod{A}$ instead of $\catLCModt{A}{\iota}$.

Given a locally convex algebra $A$ and two locally convex $A$-modules $\cH$ and $\cK$ we denote by $\cL_A ( \cH, \cK )$ the space of all continuous $A$-linear mappings from $\cH$ to $\cK$. Endowed with the topology of simple or bounded convergence (i.e., the trace topology with respect to $\cL_\sigma(\cH, \cK)$ or $\cL_{\beta}(\cH, \cK)$, respectively) we denote it by by $\cL_{A, \sigma}(\cH,\cK)$ or $\cL_{A, \beta}(\cH,\cK)$, respectively. 

$\cL_{A, \sigma} (\cH, \cK)$ with its canonical $A$-module structure is a locally convex $A$-module.%\footnote{see Rem.~1}
Moreover, if $\cK$ has $\beta$-continuous multiplication then $\cL_{A, \beta}(\cH, \cK)$ has $\beta$-continuous multiplication as well. In fact, fix a $0$-neighbourhood $U_{B,V} = \{ \ell: \ell(B) \subseteq V \}$ in $\cL_{A, \beta} ( \cH, \cK)$ where $B \subseteq \cH$ is bounded and $V \subseteq \cK$ is a $0$-neighborhood. Given a bounded subset $A' \subseteq A$, choose a $0$-neighborhood $V' \subseteq \cK$ such that $A' \cdot  V' \subseteq V$; then $U_{B, V'}$ is a $0$-neighborhood in $\cL_{A, \beta}(\cH, \cK)$ such that $A' \cdot U_{B, V'} \subseteq U_{B, V}$. On the other hand, given a bounded subset $L \subseteq \cL_{A, \beta}(\cH, \cK)$ we can find a $0$-neighborhood $W \subseteq A$ such that $W \cdot L(B) \subseteq V$ (note that $L(B)$ is bounded); this means that $W \cdot L \subseteq U_{B,V}$, which proves the claim.

Moreover, $\cL_{A, \sigma} (-, -)$ is an additive %\footnote{see Rem.~4}
functor $\catLCMod A \times \catLCMod A \to \catLCMod A$ which is contravariant in the first argument and covariant in the second argument. %\footnote{see Rem.~2}
Similarly, $\cL_{A, \beta}$ is an additive functor $\catLCMod{A} \times \catLCModt{A}{\beta} \to \catLCModt{A}{\beta}$.

\begin{lemma}\label{blahlinprop}
Given a locally convex algebra $A$ and a locally convex $A$-module $\cH$, $\cH \cong \cL_{A, \sigma} (A, \cH)$ in $\catLCMod{A}$. If $\cH$ has hypocontinuous multiplication then $\cH \cong \cL_{A, \beta} (A, \cH)$ in $\catLCModt{A}{\beta}$.
\end{lemma}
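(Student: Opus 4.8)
The plan is to realize the isomorphism concretely by \emph{evaluation at the unit}, exactly as in the purely algebraic identity $\mathrm{Hom}_A(A,\cH)\cong\cH$. I would define $\Phi\colon \cL_{A,\sigma}(A,\cH)\to\cH$ by $\Phi(\ell)\coleq \ell(1)$ and its candidate inverse $\Psi\colon\cH\to\cL_{A,\sigma}(A,\cH)$ by $\Psi(h)\coleq \ell_h$, where $\ell_h(a)\coleq a\cdot h$ is multiplication by $h$. Note that $\cL_{A,\sigma}(A,\cH)$ and $\cL_{A,\beta}(A,\cH)$ share the same underlying $A$-module, so the \emph{same} pair of maps will be used in both parts; only the continuity arguments differ.

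First I would dispatch the algebraic content. The map $\ell_h$ is $A$-linear by associativity of the module action, $\ell_h(ab)=(ab)h=a(bh)=a\cdot\ell_h(b)$, and it is continuous because module multiplication is separately continuous; hence $\Psi$ is well-defined. That $\Phi$ and $\Psi$ are mutually inverse is then immediate: $\Phi(\Psi(h))=1\cdot h=h$, while $\Psi(\Phi(\ell))(a)=a\cdot\ell(1)=\ell(a\cdot 1)=\ell(a)$ by $A$-linearity of $\ell$. Both maps are $A$-linear for the canonical module structure, since $\Phi(a\cdot\ell)=(a\cdot\ell)(1)=a\cdot\ell(1)=a\cdot\Phi(\ell)$.

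Next the topology in the $\sigma$-case. Here $\Phi$ is continuous because it is evaluation at the single point $1\in A$, and the topology of simple convergence is by construction generated by the seminorms $\ell\mapsto p(\ell(a))$ with $a\in A$ and $p$ a continuous seminorm on $\cH$. For $\Psi$, continuity into $\cL_{A,\sigma}(A,\cH)$ amounts to showing, for each such $a$ and $p$, that $h\mapsto p(\ell_h(a))=p(a\cdot h)$ is continuous, which is once more separate continuity of the module multiplication. This settles the first assertion in $\catLCMod{A}$.

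Finally the $\beta$-case, where I expect the only genuine work to lie. The continuity of $\Phi$ carries over at once, since the topology of bounded convergence is finer than that of simple convergence (and $\{1\}$ is bounded). The crux is the continuity of $\Psi$ into $\cL_{A,\beta}(A,\cH)$, whose topology is generated by the seminorms $\ell\mapsto \sup_{a\in B}p(\ell(a))$ with $B\subseteq A$ bounded; I must therefore dominate $\sup_{a\in B}p(a\cdot h)$ by a continuous seminorm in $h$. This is precisely where the extra hypothesis is used: hypocontinuity of the module multiplication with respect to the bounded subsets of $A$ guarantees that for each bounded $B\subseteq A$ and each $0$-neighborhood $V$ in $\cH$ there is a $0$-neighborhood $U$ in $\cH$ with $B\cdot U\subseteq V$, which is exactly the required estimate. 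Hence $\Psi$ is continuous and the isomorphism holds in $\catLCModt{A}{\beta}$. The main obstacle is thus localized entirely in this last estimate, and it is overcome by invoking the hypocontinuity assumption; the remainder is routine.
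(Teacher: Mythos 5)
Your proposal is correct and follows exactly the paper's own route: the isomorphism $\varphi(x)(a) = a\cdot x$ with inverse $\ell \mapsto \ell(1)$, the paper merely asserting that continuity is ``clear in both cases'' where you spell it out. Your identification of hypocontinuity with respect to bounded subsets of $A$ as the precise ingredient needed for continuity of $\Psi$ into $\cL_{A,\beta}(A,\cH)$ is exactly right.
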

\begin{proof}
 Algebraically, the isomorphism $\varphi \colon \cH \to \cL_{A} ( A, \cH )$ is given by $\varphi(x)(a) \coleq a \cdot x$ with inverse $\varphi^{-1}(l) \coleq l(1)$. Continuity of $\varphi$ and $\varphi^{-1}$ is clear in both cases.
\end{proof}

Finally, we note that $\Gamma, \Gamma_c \colon \catVB_M \to \catLCModt{C^\infty(M)}{\beta}$ (sections and compactly supported sections) are additive functors. %\footnote{see Rem.~4}

Following \cite{capelle} we will now give the construction of the tensor product of locally convex modules. Let $A$ be a locally convex algebra and $\cH,\cK$ locally convex $A$-modules. Define $J_0$ as the sub-$\bZ$-module of $\cH \otimes \cK$ (the tensor product over $\bK$) generated by all elements of the form $ma \otimes n - m \otimes an$ with $a \in A$, $m \in \cH$ and $n \in \cK$. The vector spaces $\cH \otimes_A \cK$ and $(\cH \otimes \cK) / J_0$ are isomorphic (\cite[Theorem I.5.1, p.~9]{capelle}). Noting that the closure $\overline{J_0}$ again is a sub-$\bZ$-module of $\cH \otimes_\lambda \cK$, we have a locally convex space $\cH \otimes_{A,\lambda} \cK \coleq (\cH \otimes_\lambda \cK) / \overline{J_0}$. Denoting by $q \colon \cH \otimes_\lambda \cK \to (\cH \otimes_\lambda \cK) / \overline{J_0} $ the quotient mapping we obtain a bilinear mapping $\otimes_{A,\lambda} \coleq q \circ \otimes\colon \cH \times \cK \to \cH \otimes_{A,\lambda} \cK$.

We call $\cH \otimes_{A,\lambda} \cK$ the \emph{$\lambda$-tensor product} of $\cH$ and $\cK$ over $A$. It is a locally convex $A$-module, but in general its multiplication $m \colon A \times \cH \otimes_{A, \lambda} \cK \to \cH \otimes_{A, \lambda} \cK$ is only separately continuous. In fact, for given $a \in A$ we define the mapping $m_a \colon \cH \otimes_{A,\lambda} \cK \to \cH \otimes_{A, \lambda} \cK$ as the tensor product of the multiplication mapping $x \mapsto ax$ on $\cH$ and the identity on $\cK$, which both are continuous. We then set $m(a,z) \coleq m_a(z)$, which is continuous in $z$. For continuity in $a$, given $z = q(\sum x_i \otimes y_i) \in \cH \otimes_{A, \lambda} \cK$ with $x_i \in \cH$ and $y_i \in \cK$, the value $m(a,z)$ is given by $q(\sum a x_i \otimes y_i)$ which obviously is continuous in $a$.

The $\lambda$-tensor product of locally convex modules has the following universal property.

\begin{proposition}\label{univprop}
Let $A$ be a locally convex algebra and $\cH,\cK,\cM$ locally convex $A$-modules. Then given any $\lambda$-continuous $A$-bilinear mapping $f\colon \cH \times \cK \to \cM$ there exists a unique continuous $A$-linear mapping $g\colon \cH \otimes_{A,\lambda} \cK \to \cM$ such that $f = g \circ \otimes_{A,\lambda}$. Conversely, given any $g \in \cL_{A} ( \cH \otimes_{A, \lambda} \cK, \cM)$ the mapping $g \circ \otimes_{A,\lambda}$ is $\lambda$-continuous and $A$-bilinear from $\cH \times \cK$ to $\cM$.

This correspondence gives a vector space isomorphism between the space of all $\lambda$-continuous $A$-bilinear mappings $\cH \times \cK \to \cM$ and the space $\cL_A ( \cH \otimes_{A, \lambda}\cK, \cM)$.
\end{proposition}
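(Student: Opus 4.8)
The plan is to reduce everything to the universal property of the $\bK$-tensor product $\cH \otimes_\lambda \cK$ recalled in Section~\ref{sec_preliminaries}, and then to factor through the quotient by $\overline{J_0}$. For the first assertion, let $f \colon \cH \times \cK \to \cM$ be $\lambda$-continuous and $A$-bilinear. In particular $f$ is $\lambda$-continuous and $\bK$-bilinear, so the universal property of $\otimes_\lambda$ yields a unique continuous linear map $\tilde f \colon \cH \otimes_\lambda \cK \to \cM$ with $f = \tilde f \circ \otimes$. Since $A$ is commutative, $A$-bilinearity gives $f(ma, n) = f(m, an)$ for all $a \in A$, $m \in \cH$, $n \in \cK$, so $\tilde f$ annihilates every generator $ma \otimes n - m \otimes an$ of $J_0$, and hence all of $J_0$. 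As $\cM$ is Hausdorff and $\tilde f$ is continuous, $\tilde f^{-1}(\{0\})$ is closed, whence $\tilde f$ vanishes on $\overline{J_0}$ as well. Therefore $\tilde f$ descends to a unique continuous linear map $g \colon \cH \otimes_{A,\lambda} \cK = (\cH \otimes_\lambda \cK)/\overline{J_0} \to \cM$ with $\tilde f = g \circ q$, and consequently $f = \tilde f \circ \otimes = g \circ q \circ \otimes = g \circ \otimes_{A,\lambda}$.

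It then remains to check that $g$ is $A$-linear and unique. On the elements $\otimes_{A,\lambda}(m,n) = q(m \otimes n)$ one computes $g(a \cdot \otimes_{A,\lambda}(m,n)) = g(\otimes_{A,\lambda}(am,n)) = f(am,n) = a \cdot f(m,n) = a \cdot g(\otimes_{A,\lambda}(m,n))$, using that the $A$-action on the quotient satisfies $a \cdot q(m \otimes n) = q(am \otimes n)$; since these elements span $\cH \otimes_{A,\lambda} \cK$ algebraically ($q$ being surjective), $A$-linearity of $g$ follows by additivity. Uniqueness is immediate, because any continuous $A$-linear $g'$ with $f = g' \circ \otimes_{A,\lambda}$ agrees with $g$ on this spanning set and hence everywhere.

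For the converse, given $g \in \cL_A(\cH \otimes_{A,\lambda} \cK, \cM)$ I would write $g \circ \otimes_{A,\lambda} = (g \circ q) \circ \otimes$. Here $g \circ q$ is continuous and linear while $\otimes$ is $\lambda$-continuous, so the composite bilinear map is again $\lambda$-continuous; this is trivial for $\lambda = \iota$ and for $\lambda = \beta$ follows from a short bounded-set argument, pulling back a $0$-neighbourhood of $\cM$ through $g \circ q$. It is $A$-bilinear because $\otimes_{A,\lambda}$ is $A$-balanced and $g$ is $A$-linear. Finally, the assignments $f \mapsto g$ and $g \mapsto g \circ \otimes_{A,\lambda}$ are manifestly linear, and they are mutually inverse: one composite is the identity by construction, the other by the uniqueness clause just established. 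This exhibits the asserted vector space isomorphism.

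The only genuinely delicate point will be the passage to the closure $\overline{J_0}$: one must ensure that the descended map is both well defined and continuous on the Hausdorff quotient, which is exactly where continuity of $\tilde f$ and Hausdorffness of $\cM$ are used. The stability of $\lambda$-continuity under post-composition with a continuous linear map in the case $\lambda = \beta$ is the only other step requiring a little care; everything else is routine bookkeeping.
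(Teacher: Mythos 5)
Your proof is correct and takes essentially the same route as the paper's: factor $f$ through $\cH \otimes_\lambda \cK$ via the universal property of the $\lambda$-tensor product, note that $A$-bilinearity forces $J_0 \subseteq \ker \tilde f$ and continuity then gives $\overline{J_0} \subseteq \ker \tilde f$, and descend to the quotient. The paper leaves the $A$-linearity, uniqueness and converse checks implicit ("the converse is obvious"); you have simply written them out, and correctly.
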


\begin{proof}
To given $f$ as in the statement there corresponds a continuous mapping $\tilde f \colon \cH \otimes_\lambda \cK \to \cM$; moreover, $J_0 \subseteq \ker \tilde f$ and by continuity of $\tilde f$, also $\overline{J_0} \subseteq \ker \tilde f$. Hence, there exists a continuous mapping $g \colon \cH \otimes_{A, \lambda} \cK \to \cM$ such that $f = g \circ q \circ \otimes = g \circ \otimes_{A, \lambda}$. The converse is obvious.
\end{proof}

Let locally convex $A$-modules $\cH_i,\cK_i, \cM_i$ and $A$-linear continuous mappings $f_i \in \cL_A ( \cH_i, \cK_i )$ be given for $i=1,2$. With $p,q,r$ the respective quotient mappings, the continuous mapping $q \circ (f_1 \otimes f_2) \in \cL ( \cH_1 \otimes_\lambda \cH_2, \cK_1 \otimes_{A, \lambda} \cK_2)$ vanishes on the kernel of $p$, hence induces the continuous $A$-linear mapping $f_1 \otimes_{A, \lambda} f_2 \in \cL_A ( \cH_1 \otimes_{A, \lambda} \cH_2, \cK_1 \otimes_{A, \lambda} \cK_2 )$. For $g_i \in \cL_a ( \cK_i, \cM_i)$ ($i=1,2$) we have $(g_1 \circ g_2) \otimes_{A, \lambda} (f_1 \circ f_2) = (g_1 \otimes_{A, \lambda} g_2) \circ (f_1 \otimes_{A, \lambda} f_2)$, as is easily seen from the following diagram:
\[
 \xymatrix@C=1in{
 \cH_1 \otimes_{\lambda} \cH_2 \ar@/_4em/[dd]_{(g_1 \circ f_1) \otimes (g_2 \circ f_2)} \ar[d]^{f_1 \otimes f_2} \ar[r]^{p} & \cH_1 \otimes_{A, \lambda} \cH_2 \ar[d]_{f_1 \otimes_{A, \lambda} f_2} \ar@/^4em/[dd]^{(g_1 \circ g_2) \otimes_{A, \lambda} (f_1 \circ f_2)} \\
 \cK_1 \otimes_{\lambda} \cK_2 \ar[d]^{g_1 \otimes g_2} \ar[r]^{q} & \cK_1 \otimes_{A, \lambda} \cK_2 \ar[d]_{g_1 \otimes_{A, \lambda} g_2} \\
 \cM_1 \otimes_{\lambda} \cM_2 \ar[r]^{r} & \cM_1 \otimes_{A, \lambda} \cM_2
 }
\]
In fact we only need to use that $p$ is a quotient mapping and the whole diagram except for the part in question is commutes.

This makes $\otimes_{A, \lambda}$ a functor $\catLCMod A \times \catLCMod A \to \catLCMod A$ which obviously is additive.

Given a locally convex algebra $A$ and a locally convex $A$-module $\cH$ with $\lambda$-continuous multiplication, we have a canonical continuous $A$-linear mapping $A \otimes_{A, \lambda} \cH \to \cH$ whose inverse is given by $m \mapsto 1 \otimes_{A, \lambda} m$; this defines an isomorphism
\begin{equation}\label{atimesm}
A \otimes_{A, \lambda} \cH \cong \cH \textrm{ in }\catLCMod A.
\end{equation}
%\footnote{see Rem.~6}

The following Lemma will be needed in Section \ref{distreg}.
\begin{lemma}\label{tonne}Let $A$ be a barrelled locally convex algebra. Then for locally convex $A$-modules $\cH,\cK$ at least one of which has hypocontinuous multiplication, $M \otimes_{A, \beta} N$ is a locally convex $A$-module with hypocontinuous multiplication.

In other words, we have an additive functor
\[ \otimes_{A,\beta} \colon \catLCModt{A}{\beta} \times \catLCMod{A} \to \catLCModt{A}{\beta}. \]
\end{lemma}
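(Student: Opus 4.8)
The plan is as follows. Since $\otimes_{A,\beta}$ is already known to be an additive functor $\catLCMod A\times\catLCMod A\to\catLCMod A$, the only point requiring proof is that, under the stated hypotheses, the module $\cH\otimes_{A,\beta}\cK$ has hypocontinuous (rather than merely separately continuous) multiplication; the corestriction to $\catLCModt A\beta$ and the additivity of the functor statement then follow at once from the general case. I may assume that it is $\cH$ which has hypocontinuous multiplication: writing $q$ for the quotient map onto $P\coleq\cH\otimes_{A,\beta}\cK$, one has $q\bigl((ax)\otimes y\bigr)=q\bigl(x\otimes(ay)\bigr)$ for all $a\in A$, $x\in\cH$, $y\in\cK$, so the module action may be realized on either factor and the case where $\cK$ carries the hypocontinuous multiplication is entirely symmetric. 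Recall from the discussion preceding Proposition~\ref{univprop} that the multiplication $m\colon A\times P\to P$ is separately continuous. Hypocontinuity of $m$ amounts to two statements: (i) for each bounded $B\subseteq A$ and each $0$-neighbourhood $W\subseteq P$ there is a $0$-neighbourhood $V\subseteq P$ with $B\cdot V\subseteq W$; and (ii) for each bounded $D\subseteq P$ and each $0$-neighbourhood $W\subseteq P$ there is a $0$-neighbourhood $U\subseteq A$ with $U\cdot D\subseteq W$.

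Condition (ii) is the soft part and uses only barrelledness of $A$. For bounded $D$ the family $\{m(\cdot,d):d\in D\}\subseteq\cL(A,P)$ is pointwise bounded (by separate continuity each $m(a,\cdot)$ maps the bounded set $D$ to a bounded set), hence equicontinuous by the Banach--Steinhaus theorem (see, e.g., \cite{Schaefer}); this is exactly the classical fact that a separately continuous bilinear map on a product whose first factor is barrelled is hypocontinuous with respect to the bounded subsets of its second factor, which yields (ii).

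For condition (i) I would first prove the corresponding hypocontinuity upstairs, for the bilinear map $\tilde m\colon A\times(\cH\otimes_\beta\cK)\to\cH\otimes_\beta\cK$ determined by $\tilde m(a,x\otimes y)=(ax)\otimes y$, and then descend along $q$. Fix $B\subseteq A$ bounded and write $\mu_b\colon\cH\to\cH$, $\mu_b(x)=bx$; I must show that $\{\mu_b\otimes\id_\cK:b\in B\}$ is equicontinuous on $\cH\otimes_\beta\cK$. Given a convex balanced $0$-neighbourhood $\tilde W$ of $\cH\otimes_\beta\cK$, put $\tilde V\coleq\bigcap_{b\in B}(\mu_b\otimes\id_\cK)^{-1}(\tilde W)$, which is convex, balanced and (as one checks from the conditions below) absorbing. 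Because $\cH\otimes_\beta\cK$ carries the finest locally convex topology making $\otimes$ hypocontinuous, $\tilde V$ is a $0$-neighbourhood as soon as $\otimes$ is hypocontinuous into the seminorm given by the gauge of $\tilde V$, i.e. as soon as: for bounded $B_\cH\subseteq\cH$ there is a $0$-neighbourhood $V_\cK\subseteq\cK$ with $B_\cH\otimes V_\cK\subseteq\tilde V$, and for bounded $B_\cK\subseteq\cK$ there is a $0$-neighbourhood $V_\cH\subseteq\cH$ with $V_\cH\otimes B_\cK\subseteq\tilde V$. For the first, since the multiplication on $\cH$ is hypocontinuous the set $B\cdot B_\cH$ is bounded in $\cH$, so hypocontinuity of $\otimes$ furnishes a $V_\cK$ with $(B\cdot B_\cH)\otimes V_\cK\subseteq\tilde W$, whence $B_\cH\otimes V_\cK\subseteq\tilde V$. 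For the second, hypocontinuity of $\otimes$ gives a $0$-neighbourhood $W_\cH\subseteq\cH$ with $W_\cH\otimes B_\cK\subseteq\tilde W$, and equicontinuity of $\{\mu_b:b\in B\}$ --- which is precisely the hypocontinuity of the multiplication on $\cH$ with respect to the bounded subsets of $A$ --- provides a $0$-neighbourhood $V_\cH$ with $\mu_b(V_\cH)\subseteq W_\cH$ for all $b\in B$, giving $V_\cH\otimes B_\cK\subseteq\tilde V$. Thus $\tilde V$ is a $0$-neighbourhood with $(\mu_b\otimes\id_\cK)(\tilde V)\subseteq\tilde W$ for all $b\in B$, which is the desired equicontinuity. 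Since $q$ is an open continuous surjection with $q\circ\tilde m=m\circ(\id_A\times q)$, choosing $\tilde W=q^{-1}(W)$ and $V=q(\tilde V)$ transports this to condition (i) on $P$.

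The main obstacle is the verification in (i) that $\tilde V$ is genuinely a $0$-neighbourhood of $\cH\otimes_\beta\cK$. This is where the hypocontinuity hypothesis on $\cH$ is indispensable, and it enters in two distinct ways: once through the boundedness of the product set $B\cdot B_\cH$, and once through the equicontinuity of the operators $\{\mu_b\}$; moreover the explicit description of the finest hypocontinuous tensor topology by its defining neighbourhoods is essential here. By contrast, condition (ii) rests solely on barrelledness of $A$, and the functoriality and additivity of the corestricted functor are inherited unchanged from the already-established properties of $\otimes_{A,\beta}$.
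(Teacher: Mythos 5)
Your proof is correct and follows essentially the same route as the paper's: separate continuity is taken from the construction of $\otimes_{A,\beta}$, hypocontinuity with respect to bounded subsets of the tensor product comes from barrelledness of $A$ via Banach--Steinhaus, and hypocontinuity with respect to bounded subsets of $A$ reduces to the equicontinuity of the family $\{f_a\otimes\id\ |\ a\in A'\}$. The only difference is that the paper cites Schwartz for the fact that tensoring an equicontinuous family with the identity yields an equicontinuous family on the $\beta$-tensor product, whereas you prove this directly from the neighbourhood description of the $\beta$-topology and then descend through the quotient map.
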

\begin{proof}
Suppose that $\cH$ has hypocontinuous multiplication $f \colon A \times \cH \to \cK$ (the case in which $\cK$ does is similar). $\cH \otimes_{A, \beta} \cK$ has separately continuous multiplication $m \colon A \times \cH \otimes_{A, \beta} \cK \to \cH \otimes_{A, \beta} \cK$ given by $m(a, x) \coleq (f_a \otimes_{A, \beta} \id)(x)$, where $f_a \coleq f(a,.) \in \cL(\cH, \cH)$. As $A$ is barrelled, $m$ is hypocontinuous with respect to bounded subsets of $\cH \otimes_{A, \beta} \cK$ (\cite[\S 40.2 (3) a), p.~158]{Koethe2}). It remains to show that for each bounded subset $A' \subseteq A$ the set 
\[ \{ f_a \otimes_{A,\beta} \id\ |\ a \in A' \} \subseteq \cL ( \cH \otimes_{A, \beta} \cK, \cH \otimes_{A, \beta} \cK ) \]
is equicontinuous. For this it suffices (\cite[p.~11]{zbMATH03145499}) to know that $\{ f_a\ |\ a \in A' \}$ is equicontinuous, which holds by assumption.
\end{proof}

\section{Reduction to trivial bundles}\label{sec_reduction}

We will now describe the general (classical) principle at work behind the proofs of isomorphisms \eqref{iso1} and \eqref{iso2}. Given a manifold $M$, consider two covariant functors $T,T'$ from $\catVB_M$ into any additive subcategory of $\catMod{C^\infty(M)}$. Suppose we have a natural transformation $\nu \colon T \to T'$, i.e., for each vector bundle $E$ there is a morphism $\nu_E \colon T(E) \to T'(E)$ such that for each vector bundle homomorphism $\mu \colon E \to E'$ covering the identity of $M$ the following diagram commutes:
\begin{equation}\label{twostar}
\begin{gathered}
 \xymatrix{
T(E) \ar[r]^{\nu_E} \ar[d]_{T(\mu)} & T'(E) \ar[d]^{T'(\mu)} \\
T(E') \ar[r]_{\nu_{E'}} & T'(E').
}
\end{gathered}
\end{equation}
Suppose we want to show that $\nu_E$ is a monomorphism, epimorphism or isomorphism for all $E$. We will show that this can easily be reduced to the case of the trivial line bundle $E = M \times \bK$ if $T$ and $T'$ are additive functors.

For this it is essential that for every vector bundle $E$ there exists a vector bundle $F$ such that $E \oplus F$ is trivial (\cite[Section 2.23, Theorem I, p.~76]{GHV}). Denoting by $\iota_E \colon E \to E \oplus F$ and $\pi_E \colon E \oplus F \to E$ the canonical injection and projection, respectively, we obtain the diagram
\[
 \xymatrix{
T(E) \ar[r]^{\nu_E} \ar[d]_{T(\iota_E)} & T'(E) \ar[d]^{T'(\iota_E)} \\
T(E \oplus F) \ar[r]^{\nu_{E \oplus F}} \ar[d]_{T(\pi_E)} & T'(E \oplus F) \ar[d]^{T'(\pi_E)} \\
T(E) \ar[r]^{\nu_E} & T'(E)
}
\]
which commutes because $\nu$ is natural. We see that if $\nu_{E \oplus F}$ is a monomorphism, epimorphism or isomorphism, $\nu_E$ has the same property; %\footnote{Rem.~14}
in the last case the inverse of $\nu_E$ is given by $T(\pi_E) \circ \nu_{E \oplus F}^{-1} \circ T'(\iota_E)$.

This way, the problem is reduced to the case where $E$ is a trivial vector bundle. But then it is of the form $(M \times \bK)^{(n)}$, i.e., the direct sum of $n$ copies of the trivial line bundle, and by the same reasoning as before the discussion is reduced to the case where $E = M \times \bK$. 

Hence, the main work lies in showing that $T$, $T'$ and $\nu$ have the desired properties, which is easy as soon as the respective categories are identified and a candidate for $\nu$ is found.

As an example, we have the isomorphism in $\catLCMod{C^\infty(M)}$
\begin{equation}\label{seciso}
\psi_{E,F} \colon \Gamma(M, E) \otimes_{C^\infty(M), \iota} \Gamma_c(M, F) \to \Gamma_c (M, E \otimes F)
\end{equation}
given by taking the fiberwise tensor product, i.e., $\psi(s \otimes t)(p) \coleq s(p) \otimes t(p)$. $T = \otimes_{C^\infty(M), \iota} \circ (\Gamma \times \Gamma_c)$ and $T' = \Gamma_c \circ \otimes$ are additive because $\Gamma$, $\Gamma_c$, $\otimes_{C^\infty(M), \iota}$ and $\otimes$ are, and $\psi$ is easily seen to be a natural transformation. By the above procedure the claim that $\psi_{E,F}$ is an isomorphism can be reduced to the case $E=F=M \times \bK$, which amounts to establishing $C^\infty(M) \otimes_{C^\infty(M), \iota} C^\infty_c(M) \cong C^\infty_c(M)$. This follows using \eqref{atimesm} from the fact that the module multiplication $C^\infty(M) \times C^\infty_c(M) \to C^\infty_c(M)$ is separately continuous and $\psi$ is an isomorphism. Note that the $\iota$-tensor product and the $\beta$-tensor product coincide here because $\Gamma(M, E)$ and $\Gamma_c(M, F)$ are barrelled.

\section{Representations of the space of distributions}

Let $\cH$ be a locally convex $C^\infty(M)$-module with multiplication
\begin{align*}
m & \colon C^\infty(M) \times \cH \to \cH \\
\intertext{and (via Proposition \ref{univprop}) associated linear continuous mapping}
\widetilde m & \colon C^\infty(M) \otimes_{C^\infty(M), \iota} \cH \to \cH.
\end{align*}
In order to obtain the second isomorphism of \eqref{iso1} (for which we set $\cH = \cD'(M)$) we endow $\Gamma(M, E) \otimes_{C^\infty(M)} \cH$ with the $\iota$-tensor product topology and $\cL_{C^\infty(M)} \bigl( \Gamma(M, E^*), \cH \bigr)$ with the simple topology.

We first show the following:
\begin{theorem}\label{thm_asdfasasdf}For any locally convex $C^\infty(M)$-module $\cH$, the following isomorphism of locally convex $C^\infty(M)$-modules holds:
\begin{equation}\label{claim}
\Gamma(M,E) \otimes_{C^\infty(M), \iota} \cH \cong \cL_{C^\infty(M),\sigma} \bigl( \Gamma(M, E^*), \cH\bigr).
\end{equation}
\end{theorem}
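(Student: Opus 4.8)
The plan is to apply the reduction principle of Section \ref{sec_reduction}. Set $T(E) \coleq \Gamma(M,E) \otimes_{C^\infty(M),\iota} \cH$ and $T'(E) \coleq \cL_{C^\infty(M),\sigma}(\Gamma(M,E^*),\cH)$. Both are covariant functors $\catVB_M \to \catLCMod{C^\infty(M)}$: $T$ is the composite of $\Gamma(M,-)$ with $-\otimes_{C^\infty(M),\iota}\cH$, while $T'$ is the composite of the contravariant dual bundle functor $\textunderscore^*$, the section functor $\Gamma(M,-)$, and the functor $\cL_{C^\infty(M),\sigma}(-,\cH)$ (contravariant in its first slot), the two contravariant steps yielding an overall covariant functor. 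Since $\textunderscore^*$, $\Gamma(M,-)$, $\otimes_{C^\infty(M),\iota}$ and $\cL_{C^\infty(M),\sigma}(-,\cH)$ are all additive, so are $T$ and $T'$. For the natural transformation I take the pointwise contraction, defined on elementary tensors by
\[ \nu_E(s \otimes h)(\sigma) \coleq \langle \sigma, s\rangle \cdot h, \qquad s \in \Gamma(M,E),\ h \in \cH,\ \sigma \in \Gamma(M,E^*), \]
where $\langle\sigma,s\rangle \in C^\infty(M)$ is the pointwise duality pairing and $\cdot$ denotes the module action.

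To see that $\nu_E$ is a well-defined morphism in $\catLCMod{C^\infty(M)}$ I would argue via Proposition \ref{univprop}. For fixed $s,h$ the assignment $\sigma \mapsto \langle\sigma,s\rangle\cdot h$ is $C^\infty(M)$-linear and continuous, being the composite of the continuous pairing $\Gamma(M,E^*)\to C^\infty(M)$ with the continuous module multiplication $m(-,h)$; hence $\nu_E(s\otimes h) \in \cL_{C^\infty(M)}(\Gamma(M,E^*),\cH)$. The bilinear map $(s,h)\mapsto \nu_E(s\otimes h)$ is then $C^\infty(M)$-bilinear, since $\langle\sigma,as\rangle\cdot h = a\langle\sigma,s\rangle\cdot h = \langle\sigma,s\rangle\cdot(ah)$, and it is separately continuous into $\cL_{C^\infty(M),\sigma}(\Gamma(M,E^*),\cH)$, because continuity for the simple topology is tested on each fixed $\sigma$, where it reduces again to continuity of the pairing and of the module multiplication. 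Proposition \ref{univprop} then produces the continuous $C^\infty(M)$-linear map $\nu_E$. Naturality with respect to a bundle morphism $\mu\colon E\to E'$ follows from the transpose identity $\langle\mu^*\sigma',s\rangle = \langle\sigma',\mu s\rangle$: tracing $s\otimes h$ both ways around the square \eqref{twostar} yields the element $\sigma'\mapsto\langle\mu^*\sigma',s\rangle\cdot h$ in either case. I expect this bookkeeping — checking separate continuity into the simple topology together with the module-linearity of $\nu_E$ — to be the only genuinely technical point; everything else is formal.

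With $T,T'$ additive and $\nu$ natural, the reduction of Section \ref{sec_reduction} lets me assume $E = M\times\bK$ is the trivial line bundle. Then $\Gamma(M,E) = \Gamma(M,E^*) = C^\infty(M)$, and the asserted isomorphism reads
\[ C^\infty(M) \otimes_{C^\infty(M),\iota} \cH \cong \cL_{C^\infty(M),\sigma}(C^\infty(M),\cH). \]
The left-hand side is isomorphic to $\cH$ by \eqref{atimesm} (valid since every locally convex module has $\iota$-continuous, i.e.\ separately continuous, multiplication), with $1\otimes h \mapsto h$; the right-hand side is isomorphic to $\cH$ by Lemma \ref{blahlinprop}, with $h \mapsto (\sigma \mapsto \sigma\cdot h)$. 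Under the trivialization the pairing $\langle\sigma,s\rangle$ becomes ordinary multiplication of functions, so $\nu_{M\times\bK}(1\otimes h)(\sigma) = \sigma\cdot h$; thus $\nu_{M\times\bK}$ is exactly the isomorphism of Lemma \ref{blahlinprop} precomposed with the isomorphism \eqref{atimesm}, and is therefore an isomorphism in $\catLCMod{C^\infty(M)}$. By the reduction principle $\nu_E$ is an isomorphism for every $E$, which is the claim.
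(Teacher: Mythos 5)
Your proposal is correct and follows essentially the same route as the paper: the same functors $T$, $T'$, the same contraction-based natural transformation $\nu_E(s\otimes h)(\sigma)=\langle\sigma,s\rangle\cdot h$, reduction to the trivial line bundle by additivity, and the identification of $\nu_{M\times\bK}$ with the composite of \eqref{atimesm} and Lemma \ref{blahlinprop}. The only (immaterial) difference is that you construct $\nu_E$ via the universal property of Proposition \ref{univprop}, whereas the paper writes it directly as $\widetilde m\circ(c_s\otimes_{C^\infty(M),\iota}\id_\cH)$ on the quotient.
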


In order to apply the procedure of Section \ref{sec_reduction} we introduce some notation: let $E, E'$ be vector bundles over $M$ and $\mu \in \catVB_M(E, E')$; then $\Gamma(\mu) = \mu_* \in \cL_{C^\infty(M)} \bigl( \Gamma(M, E) , \Gamma(M, E') \bigr)$ denotes pushforward of sections and $(\Gamma \circ \textunderscore^*)(\mu) = \mu^* \in \cL_{C^\infty(M)} \bigl( \Gamma(M, (E')^*) , \Gamma(M, E^*) \bigr)$, which is defined via contraction by $(\mu^*s) \cdot t \coleq s \cdot (\mu_*t)$ for $s \in \Gamma(M, (E')^*)$ and $t \in \Gamma(M,E)$, is the pullback of sections of the dual bundle. The functors $T,T' \colon \catVB_M \to \catLCMod{C^\infty(M)}$ and the natural transformation $\nu \colon T \to T'$ are defined as follows: %\footnote{see Rem.~7}
\begin{itemize}
 \item $T(E) \coleq \Gamma(M,E) \otimes_{C^\infty(M), \iota} \cH$, $T(\mu) \coleq \mu_* \otimes_{C^\infty(M), \iota} \id_\cH$. In other words, $T= (\textunderscore \otimes_{C^\infty(M), \iota} \cH) \circ \Gamma$.
 \item $T'(E) \coleq \cL_{C^\infty(M), \sigma} \bigl( \Gamma(M, E^*), \cH \bigr)$, $T'(\mu) \coleq [\ell \mapsto \ell \circ \mu^*]$. In other words, $T' = \cL_{C^\infty(M), \sigma} ( \textunderscore, \cH) \circ \Gamma \circ \textunderscore^*$.
 \item $\nu_E \in \cL_{C^\infty(M)} \bigl( T(E), T'(E) \bigr)$ is given by
 \[ \nu_E (x)(s) \coleq \widetilde m \bigl( ( c_s \otimes_{C^\infty(M), \iota} \id_\cH) (x) \bigr) \]
 for $x \in T(E)$ and $s \in \Gamma(M, E^*)$, where $c_s \in \cL_{C^\infty(M)} \bigl( \Gamma(M, E),  C^\infty(M) \bigr)$ denotes contraction with $s$. Given $x \in T(E)$, which can always be written in the form $x = q(\sum t_i \otimes h_i)$ with $t_i \in \Gamma(M,E)$, $h_i \in \cH$ and $q$ the quotient mapping, one has $\nu_E(x)(s) = \sum_i m(t_i \cdot s, h_i)$, from which continuity in $s$ and also in $x$ is obvious.
\end{itemize}

Because the functors $T$ and $T'$ are given by the composition of additive functors they also are additive. Next, we show that $\nu$ is a natural transformation: because for $s \in \Gamma (M, (E')^*)$ and $t \in \Gamma(M, E)$, $c_{\mu^* s}(t) = \mu^*s \cdot t = s \cdot \mu_*t = (c_s \circ \mu_*)(t)$ we have $\bigl(T'(\mu) \circ \nu_E\bigr)(x)(s) = \nu_E(x)(\mu^*s) = \widetilde m \bigl( (c_{\mu^*s} \otimes_{C^\infty(M), \iota} \id_\cH)(x)\bigr) = \widetilde m \bigl(((c_s \circ \mu_*) \otimes_{C^\infty(M), \iota} \otimes \id_\cH)(x)\bigr) = \nu_{E'} \bigl( (\mu_* \otimes_{C^\infty(M), \iota} \id_\cH)(x)\bigr)(s) = \bigl(\nu_{E'} \circ T(\mu)\bigr)(x)(s)$.

Hence, \eqref{claim} follows because by Lemma \ref{blahlinprop} and \eqref{atimesm}
\[ \nu_{M \times \bK} \colon C^\infty(M) \otimes_{C^\infty(M), \iota} \cH \to \cL_{C^\infty(M), \sigma} \bigl( C^\infty(M), \cH \bigr) \]
is an isomorphism with inverse $l \mapsto 1 \otimes_{C^\infty(M), \iota} l(1)$ in $\catLCMod{C^\infty(M)}$.

In order to prove \eqref{iso1} we will show the outer spaces to be isomorphic, which means that
\[ \cD'(M, E) \cong \cL_{C^\infty(M), \sigma} \bigl( \Gamma(M, E^*), \cD'(M) \bigr) \]
in $\catLCMod{C^\infty(M)}$. Once more, we reduce everything to the case of trivial bundles. Let
\[ \psi_{E^*} \colon \Gamma(M, E^*) \otimes_{C^\infty(M), \iota} \Gamma_c \bigl(M, \Vol(M) \bigr) \to \Gamma_c \bigl( M, E^* \otimes \Vol(M) \bigr) \]
denote isomorphism \eqref{seciso}. With $\mu$ as before we define our functors $T$ and $T' \colon \catVB_M \to \catLCMod{C^\infty(M)}$ and the natural transformation $\nu \colon T \to T'$ as follows:
\begin{itemize}
 \item $T(E) \coleq \cD'(M, E)$, $T(\mu)(u) \coleq u \circ \psi_{E^*} \circ (\mu^* \otimes_{C^\infty(M), \iota} \id) \circ \psi_{(E')^*}^{-1} \in \cD'(M, E')$ for $u \in \cD'(M, E)$.
 \item $T'(E) \coleq \cL_{C^\infty(M), \sigma} \bigl( \Gamma(M, E^*), \cD'(M) \bigr)$ and $T'(\mu)(\ell) \coleq \ell \circ \mu^*$ for each $\ell \in \cL_{C^\infty(M)} \bigl( \Gamma(M, E^*), \cD'(M) \bigr)$.
 \item $\nu_E (u)(s)(\omega) \coleq \langle u, \psi_{E^*} ( s \otimes_{C^\infty(M), \iota} \omega) \rangle$ for $u \in \cD'(M, E)$, $s \in \Gamma(M, E^*)$ and $\omega \in \Gamma_c\bigl(M, \Vol(M)\bigr)$. That $\nu_E(u)(s)$ is in $\cD'(M)$ and $\nu_E(u)$ is $C^\infty(M)$-linear is clear. Finally, $\nu_E(u)$ is continuous: in fact, let $s_n \to 0$ in $\Gamma(M, E^*)$. Then because $\Gamma_c\bigl(M, \Vol(M) \bigr)$ is barrelled, we have $[\omega \mapsto \langle u, \psi_{E^*} (s_n \otimes_{C^\infty(M), \iota} \omega ) \rangle ]\to 0$ in $\cD'(M)$ if it converges pointwise, which obviously is the case.
\end{itemize}
As before, $T$ and $T'$ are additive because they are given by the composition of additive functors; one easily verifies that $\nu$ is a natural transformation. %\footnote{see Rem.~8}
Hence, the claim is reduced to the trivial line bundle $M \times \bK$, for which it reads
\begin{equation}\label{alpha}
\cD'(M) \cong \cL_{C^\infty(M), \sigma} \bigl( C^\infty(M), \cD'(M) \bigr).
\end{equation}

By Lemma \ref{blahlinprop} this is an isomorphism in $\catLCMod{C^\infty(M)}$ with inverse $\ell \mapsto \ell(1)$. This completes the proof of \eqref{iso1}.

In order to round off this first result we show that we can in fact use stronger topologies:
\begin{align}
 \Gamma(M, E) \otimes_{C^\infty(M), \iota } \cD'(M) &= \Gamma (M, E) \otimes_{C^\infty(M), \beta} \cD'(M), \nonumber  \\
\cL_{C^\infty(M), \sigma} \bigl( \Gamma(M, E^*), \cD'(M) \bigr) &= \cL_{C^\infty(M), \beta} \bigl( \Gamma(M, E^*), \cD'(M) \bigr). \label{betaa}
\end{align}
For the first, we simply note that $\Gamma(M, E)$ and $\cD'(M)$ are barrelled, hence every seperately continuous bilinear mapping from $\Gamma(M, E) \times \cD'(M)$ into any locally convex space is hypocontinuous (\cite[\S 40 2.(5) a), p.~159]{Koethe2}).

For the second, we need to show that every $0$-neighborhood of the $\beta$-topology on $\cL_{C^\infty(M)} \bigl ( \Gamma(M, E^*), \cD'(M) \bigr)$, which can be taken to be of the form $U_{B, V} \coleq \{ \ell : \ell(B) \subseteq V \}$ for $B \subseteq \Gamma(M, E^*)$ bounded and $V \subseteq \cD'(M)$ a $0$-neighborhood, contains a $0$-neighborhood of the $\sigma$-topology, i.e., one of the form $U_{B', V'}$ where $B' \subseteq \Gamma(M, E^*)$ is finite and $V'$ again is a $0$-neighborhood in $\cD'(M)$.

For this we need to use the fact that $\Gamma(M, E^*)$ is a finitely generated $C^\infty(M)$-module as follows: let $F$ be such that $E^* \oplus F$ is trivial and choose a basis $(b_i)_{i=1 \dotsc n}$ of the $C^\infty(M)$-module $\Gamma(E^* \oplus F)$ with corresponding dual basis $(\beta^i)_{i=1 \dotsc m}$. Denoting by $\iota$ and $\pi$ the canonical injection of $E^*$ into $E^* \oplus F$ and the corresponding projection, respectively, we can write any $s \in \Gamma(M, E^*)$ as
\[
 s = \pi_* ( \iota_* s) = \pi_* \bigl( \sum_{i=1}^n \beta^i ( \iota_* s ) \cdot b_i ) = \sum_{i=1}^n \beta^i ( \iota_* s ) \cdot \pi_* b_i.
\]
Hence, $B' \coleq ( \pi_* b_i)_{i=1 \dotsc n}$ is a generating set of $\Gamma(M, E^*)$. Because $\beta^i \circ \iota_*$ is a continuous linear mapping $\Gamma(M, E^*) \to C^\infty(M)$ the set $D \coleq \{ s^i\ |\ s \in B,\ i = 1\dotsc n \} \subseteq C^\infty(M)$ with with $s^i \coleq \beta^i ( \iota_* s )$ for $s \in B$ is bounded.

Choose $0$-neighborhoods $V', V''$ in $\cD'(M)$ such that $V'' + \dotsc + V''$ ($n$ summands) is contained in $V$ and $D \cdot V' \subseteq V''$. Then $U_{B', V'}$ is a $0$-neighborhood for the topology of simple convergence and for $s \in B$ and $\ell \in U_{B', V'}$ we have
\[ \ell(s) = \ell ( \sum_i s^i \pi_* b_i ) = \sum_i s^i \ell(\pi_* b_i) \in \sum_i D \cdot V' \subseteq \sum_i V'' \subseteq V \]
which means that $U_{B', V'} \subseteq U_{B, V}$. Hence, the topologies of simple and bounded convergence coincide.

\section{Regularization of vector valued distributions}\label{distreg}

In this section we are going to show isomorphism \eqref{iso2}, for which we require some preliminaries. First of all, we recall L.~Schwartz' notion of $\e$-product from \cite{zbMATH03145498}. For any locally convex space $\cH$, let $\cH'_c$ denote the dual space of $\cH$ endowed with the topology of uniform convergence on absolutely convex %\footnote{wenn $\cH$ quasivollständig ist, ist kompakte konverenz = konvex kompakte konvergenz. wann ist sie = abs. konv komp konvergenz?}
compact subsets of $\cH$. The $\e$-product $\cH \eps \cK$ of two locally convex spaces $\cH$ and $\cK$ is defined as the vector space of all bilinear mappings $\cH'_c \times \cK'_c \to \bK$ which are hypocontinuous with respect to equicontinuous subsets of $\cH'$ and $\cK'$. It is endowed with the topology of uniform convergence on products of equicontinuous subsets of $\cH'$ and $\cK'$ (\cite[\S 1, Definition, p.~18]{zbMATH03145498}). There is a canonical isomorphism $\cH \eps \cK \cong \cL_\e ( \cH'_c, \cK)$, where the latter space is endowed with the topology of uniform convergence on equicontinuous subsets of $\cH'$ (\cite[\S 1, Corollaire 2 to Proposition 4, p.~34]{zbMATH03145498}). By \cite[\S 1, Proposition 3, p.~29]{zbMATH03145498} $\cH \eps \cK$ is complete if $\cH$ and $\cK$ are complete. Noting that $\cH \otimes \cK$ is canonically contained in $\cH \eps \cK$ (\cite[p.~19]{zbMATH03145498}), given locally convex spaces $\cH_i$, $\cK_i$ for $i=1,2$ and linear continuous mappings $f \colon \cH_1 \to \cH_2$ and $g \colon \cK_1 \to \cK_2$ there is a canonical continuous linear map $f \eps g \colon \cH_1 \eps \cK_1 \to \cH_2 \eps \cK_2$ extending the map $f \otimes g \colon \cH_1 \otimes \cK_1 \to \cH_2 \otimes \cK_2$ (\cite[\S1 Proposition 1, p.~20]{zbMATH03145498}).

We will need the $\e$-product in two ways. First, fix two vector bundles $E \to M$ and $F \to N$. Viewing $\Gamma(M, E) \otimes \Gamma(N, F)$ (endowed with the projective tensor topology) as a dense subspace of $\Gamma(M \times N, E \boxtimes F)$, %\footnote{In general, the tensor product $f \otimes g$ of two functions $f \colon X \to \bK$ and $g \colon Y \to \bK$ defined on any sets $X,Y$ is the function $f \otimes g \colon X \times Y \to \bK$ defined by $f(x \otimes y) \coleq f(x) \cdot g(y)$ for $x \in X$ and $y \in Y$.}
its completion $\Gamma(M, E) \widehat\otimes \Gamma(N, F)$ is isomorphic as a locally convex space to $\Gamma(M, E) \eps \Gamma(N, F)$ (\cite[\S 1, Corollaire 1 to Proposition 11, p.~47]{zbMATH03145498}) because $\Gamma(M,E)$ and $\Gamma(N, F)$ are complete and have the approximation property. Second, we note that on $\cD'(M, E)$ the topology of bounded convergence coincides with the topology of absolutely convex compact convergence.
Because $\Gamma_c\bigl(M, E^* \otimes \Vol(M) \bigr)$ is barrelled, on $\cL(\cD'(M, E), \Gamma(N,F))$ the topologies of bounded and equicontinuous convergence coincide as a consequence of the Banach-Steinhaus theorem. Summarizing, we have
\begin{align*}
 \Gamma(M, E) \widehat\otimes \Gamma(N, F) & = \Gamma(M \times N, E \boxtimes F) \cong \Gamma(M, E) \eps \Gamma(N, F), \\
 \cL_\beta \bigl( \cD'(M, E), \Gamma(N, F)\bigr) & = \Gamma_c\bigl(M, E^* \otimes \Vol(M)\bigr) \eps \Gamma(N, F), \\
 \cL_\beta ( \cD'(M), C^\infty(N) ) &= \Gamma_c(M, \Vol(M)) \eps C^\infty(N).
\end{align*}
In order to define a locally convex $C^\infty(M \times N)$-module structure on these spaces and establish the desired isomorphism \eqref{iso2} we will employ the following Lemma. While its proof can be based on L.~Schwartz' \textit{Th\'eor\`eme de croisement} (\cite[\S 2, Proposition 2, p.~18]{zbMATH03145499}; cf.~also \cite[Proposition 2]{zbMATH06260723}) we prefer to give a direct proof.

\begin{lemma}\label{innsbruck}Let $A$ and $B$ be nuclear Fr\'echet spaces and $M_1,M_2, N_1, N_2$ locally convex spaces with $M_2$ and $N_2$ complete. Suppose we are given two hypocontinuous bilinear mappings $f \colon A \times M_1 \to M_2$ and $g \colon B \times N_1 \to N_2$.
Then there exists a unique separately continuous bilinear mapping
 \[ \sigma \colon A \widehat \otimes B \times M_1 \eps N_1 \to M_2 \eps N_2 \]
satisfying
\begin{equation}\label{nikolaus}
\sigma ( a \otimes b, u) \coleq \bigl( f(a, \cdot) \eps g(b, \cdot)\bigr)(u)\qquad \forall a \in A, b \in B, u \in M_1 \eps M_2.
\end{equation}
Moreover, $\sigma$ even is hypocontinuous.
\end{lemma}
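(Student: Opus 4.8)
The plan is to construct $\sigma$ by fixing its second argument. For continuous linear maps $\phi\colon M_1\to M_2$ and $\psi\colon N_1\to N_2$ the map $\phi\eps\psi\colon M_1\eps N_1\to M_2\eps N_2$ is available and depends bilinearly on $(\phi,\psi)$, so $(a,b)\mapsto f(a,\cdot)\eps g(b,\cdot)$ is a bilinear family of continuous linear maps indexed by $A\times B$. For fixed $u\in M_1\eps N_1$ I would first study
\[ \Phi_u\colon A\times B\to M_2\eps N_2,\qquad \Phi_u(a,b)\coleq\bigl(f(a,\cdot)\eps g(b,\cdot)\bigr)(u), \]
show it is separately continuous, conclude it is jointly continuous because $A$ and $B$ are Fr\'echet (hence barrelled and metrizable), and then, since $M_2\eps N_2$ is complete, extend it via the universal property of the completed projective tensor product to a continuous linear $\widehat\Phi_u\colon A\widehat\otimes B\to M_2\eps N_2$. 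Setting $\sigma(w,u)\coleq\widehat\Phi_u(w)$ gives a map which is linear in $w$ by construction and linear in $u$ because $u\mapsto\Phi_u$ is linear and the extension is unique; it satisfies \eqref{nikolaus} by definition, and any separately continuous bilinear map obeying \eqref{nikolaus} agrees with it on $A\otimes B$ and hence, by continuity in the first slot and density, everywhere, which yields uniqueness.

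For the separate continuity of $\Phi_u$ I would pass to the identification $M_i\eps N_i\cong\cL_\e\bigl((M_i)'_c,N_i\bigr)$, under which $u$ becomes an operator $(M_1)'_c\to N_1$ and $\bigl(f(a,\cdot)\eps g(b,\cdot)\bigr)(u)=g(b,\cdot)\circ u\circ f(a,\cdot)^{t}$. Fixing $b$ and $u$, continuity in $a$ amounts to showing that, for an equicontinuous $H\subseteq(M_2)'$ and a continuous seminorm $q$ on $N_2$, the quantity $\sup_{\eta\in H}q\bigl(g(b,u(f(a,\cdot)^{t}\eta))\bigr)$ tends to $0$ as $a\to 0$. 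The hypocontinuity of $f$ forces $\{f(a,\cdot)^{t}\eta:\eta\in H\}$ to shrink in $(M_1)'_c$ as $a\to 0$, since $H$ is bounded on some $0$-neighbourhood $W$ of $M_2$ and $f$ maps, for small $a$, a prescribed bounded set of $M_1$ into $W$; applying the continuous operator $u$ and then $g(b,\cdot)$ gives the claim, and the two variables play symmetric roles.

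The genuinely delicate point, and the place where nuclearity is indispensable, is continuity of $\sigma$ in $u$ for a fixed $w$ that is not a finite tensor, together with hypocontinuity. Using that $A,B$ are nuclear Fr\'echet I would represent $w=\sum_k\lambda_k\,a_k\otimes b_k$ with $(\lambda_k)$ absolutely summable and $a_k\to 0$ in $A$, $b_k\to 0$ in $B$, and aim at an estimate of the form
\[ \sup_{\eta\in H}q\bigl(\sigma(w,u)\,\eta\bigr)\ \le\ \Bigl(\sum_k|\lambda_k|\Bigr)\,\sup_{\xi\in G}r(u\,\xi), \]
where $r$ is a continuous seminorm on $N_1$ and $G\subseteq(M_1)'$ is equicontinuous, both depending only on $H,q$ and the bounded sequences $(a_k),(b_k)$. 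The seminorm $r$ is produced from the hypocontinuity of $g$ relative to the bounded set $\{b_k\}$, giving $q(g(b_k,y))\le r(y)$; the equicontinuous set $G$ is produced from the hypocontinuity of $f$ relative to $\{a_k\}$, which sends $\{a_k\}$ times a suitable $0$-neighbourhood of $M_1$ into a fixed $0$-neighbourhood of $M_2$ on which $H$ is bounded, so that $\{f(a_k,\cdot)^{t}\eta:\eta\in H,\ k\}$ is equicontinuous. The summability of $(\lambda_k)$ makes the defining series converge in the complete space $M_2\eps N_2$ and furnishes the scalar factor. Since the right-hand side is a continuous seminorm of $u$ on $M_1\eps N_1$, this proves continuity of $u\mapsto\sigma(w,u)$.

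Finally, hypocontinuity follows by making this estimate uniform. Running it over a bounded $S\subseteq A\widehat\otimes B$ is possible because bounded subsets of the completed projective tensor product of Fr\'echet spaces are contained in the closed absolutely convex hull of a product of bounded subsets of the factors; choosing $(a_k)\subseteq\Sigma_A$ and $(b_k)\subseteq\Sigma_B$ from these then makes $G$ and $r$ independent of $w\in S$ and the $\ell^1$-bound uniform, yielding hypocontinuity of $\sigma$ with respect to bounded subsets of $A\widehat\otimes B$. As $\sigma$ is separately continuous and $A\widehat\otimes B$ is barrelled, hypocontinuity with respect to bounded subsets of $M_1\eps N_1$ follows from \cite[\S 40.2 (3) a), p.~158]{Koethe2}, exactly as in the proof of Lemma~\ref{tonne}; together these give the asserted full hypocontinuity. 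I expect the main obstacle to be precisely the uniform estimate of the third and fourth paragraphs: extracting the summable representation, turning the hypocontinuities of $f$ and $g$ into a single equicontinuous family in $(M_1)'$ plus one continuous seminorm on $N_1$, and controlling all of this uniformly over bounded sets of the tensor product.
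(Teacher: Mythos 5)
Your proposal is correct in substance but follows a genuinely different route from the paper's proof. The paper works with the trilinear map $\tilde\sigma(a,b,u)=\bigl(f(a,\cdot)\eps g(b,\cdot)\bigr)(u)$ and proves its hypocontinuity in all three slots at once; the delicate direction (a $0$-neighbourhood in $A$ working uniformly over a bounded $D\subseteq M_1\eps N_1$) is handled by invoking L.~Schwartz' result that bounded subsets of an $\eps$-product are $\eps$-equihypocontinuous. It then extends in the $(a,b)$-variable by completeness, obtains hypocontinuity in the first slot of $\sigma$ from the lifting of bounded sets of $A\widehat\otimes B$ into $\cacx(A'\otimes B')$ with $A',B'$ bounded, and gets equicontinuity of $\{\sigma(\cdot,u)\ |\ u\in D\}$ from separate equicontinuity. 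You instead fix $u$, upgrade separate to joint continuity of $\Phi_u$ using that $A,B$ are Fr\'echet, extend by the universal property of the completed projective tensor product, and then handle continuity and hypocontinuity in the tensor slot by explicit $\ell^1$-series estimates, recovering hypocontinuity in the $\eps$-product slot from barrelledness of $A\widehat\otimes B$. This avoids the $\eps$-equihypocontinuity machinery entirely and is more self-contained, but it concentrates the difficulty in the series estimates, where two points deserve more care than you give them. First, the assertion that bounded subsets of $A\widehat\otimes B$ are contained in $\cacx(C\otimes D)$ with $C\subseteq A$, $D\subseteq B$ bounded is \emph{false} for general Fr\'echet spaces (Taskinen's negative solution of Grothendieck's probl\`eme des topologies); it is precisely here, and not in the element-wise representation $w=\sum_k\lambda_k\,a_k\otimes b_k$ (which needs only metrizability), that nuclearity is indispensable --- your text attributes nuclearity to the wrong step. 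Second, knowing $S\subseteq\cacx(C\otimes D)$ does not immediately provide every $w\in S$ with an $\ell^1$-representation whose factors lie in $C$ and $D$; you either need to replace $C,D$ by closed absolutely convex hulls of null sequences (possible since bounded sets in nuclear Fr\'echet spaces are relatively compact), or, more cheaply, test against closed absolutely convex $0$-neighbourhoods $U=(X\times Y)^\circ$ and use continuity of $\sigma(\cdot,u)$ together with $\sigma\bigl(\cacx(C\otimes D),V\bigr)\subseteq\cacx\bigl(\sigma(C\otimes D,V)\bigr)\subseteq\cacx(U)=U$, which is exactly the paper's device. With these repairs your argument goes through.
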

\begin{proof}
 For $a \in A$ and $b \in B$ we set $f_a \coleq f(a, \cdot) \in \cL(M_1, M_2)$ and $g_b \coleq g(b, \cdot) \in \cL(N_1, N_2)$. We define a trilinear map $\tilde \sigma \colon A \times B \times M_1 \eps N_1 \to M_2 \eps N_2$ by $\tilde \sigma(a, b, u) \coleq (f_a \eps g_b)(u)$. In order to show that it is hypocontinuous fix bounded subsets $A' \subseteq A$, $B' \subseteq B$ and $D \subseteq M_1 \eps N_1$ and a $0$-neighborhood $U$ in $M_2 \eps N_2$ of the form $U = (X \times Y)^\circ$, where $X \subseteq M_2'$ and $Y \subseteq N_2'$ are arbitrary equicontinuous subsets.
 
(i) As $\{f_a\ |\ a \in A'\}$ and $\{g_b\ |\ b \in B'\}$ are equicontinuous by assumption, $X' \coleq \{ m_2' \circ f_a\ |\ m_2' \in X, a \in A'\} \subseteq M_1'$ and $Y' \coleq \{ n_2' \circ g_b\ |\ n_2' \in Y, b \in B'\} \subseteq N_1'$ are equicontinuous as well and $V \coleq (X' \times Y')^\circ$ is a $0$-neighborhood in $M_1 \eps N_1$ such that $\tilde \sigma(A', B', V) \subseteq U$.

(ii) We need to find a $0$-neighborhood $V$ in $A$ such that $\tilde \sigma(V, B', D) \subseteq U$. By \cite[\S 1, Proposition 2 bis, p.~28]{zbMATH03145498} $D$ is an $\e$-equihypocontinuous subset of $\cL\bigl( (M_1)'_\beta \times (N_1)'_\beta, \bK \bigr)$. This means that there exists a $0$-neighborhood of the form $C^\circ$ in $(M_1')_b$, with $C \subseteq M_1$ bounded, such that $u(C^\circ, Y') \in \bD$ for all $u \in D$, where $\bD$ is the closed unit disk of $\bK$. Hence, we only need to choose $V$ such that $m_2' \circ f(a,c) \in \bD$ for all $m_2' \in X$, $a \in V$ and $c \in C$, which is possible by the assumption on $f$. This means that $m_2' \circ f(a, \cdot) \in C^\circ$ and hence $\tilde \sigma(V, B', D)(X,Y) \subseteq D(X \circ f(V, \cdot), Y') \in \bD$.

(iii) Similarly, one can find a $0$-neighborhood $V \subseteq B$ such that $\tilde \sigma(A', V, D) \subseteq U$.

This shows that $\tilde \sigma$ is hypocontinuous as claimed. Because $M_2 \eps N_2$ is complete (\cite[\S 1, Proposition 3, p.~29]{zbMATH03145498}), for each $u$ the (by \cite[\S 40 2.(1) a), p.~158]{Koethe2}) continuous map $\tilde \sigma_u \coleq \tilde \sigma(., ., u)$ has a unique extension to a linear continuous map $\sigma_u \colon A \widehat\otimes B \to M_2 \eps N_2$. We now define $\sigma \colon A \widehat \otimes B \times M_1 \eps N_1 \to M_2 \eps N_2$ by $\sigma(x,u) \coleq \sigma_u(x)$, which by definition is linear and continuous in $x$. For linearity in $u$, it suffices by continuity in $x$ to verify $\sigma(x, u_1 + \lambda u_2) = \sigma(x, u_1) + \lambda \sigma(x, u_2)$ for $u_1,u_2 \in M_1 \eps M_2$ and $\lambda \in \bK$ for $x$ in the dense subspace $A \otimes B$, where it is evident. For hypocontinuity let $Z \subseteq A \widehat \otimes B$ be bounded and $D,U$ as above. Then by \cite[Theorem 21.5.8, p.~495]{Jarchow}) there exist bounded subsets $A' \subseteq A$ and $B' \subseteq B$ such that $Z \subseteq \cacx(A' \otimes B')$, where $\cacx$ denotes the absolutely convex closed hull of a set. Choosing $V$ as in (i) above we then have $\sigma(Z, V) \subseteq \cacx (\sigma(A' \otimes B', V)) = \cacx (\tilde \sigma(A', B', V)) \subseteq \cacx U = U$ because $\sigma$ is continuous in the first variable and $U$ is closed and absolutely convex. On the other hand, $\{ \sigma(., u)\ |\ u \in D\}$ is equicontinuous because $\{ \tilde \sigma (.,.,u)\ |\ u \in D \}$ is separately equicontinuous (\cite[\S 40 2.(2), p.~158]{Koethe2}). Because $A \otimes B$ is dense in $A \widehat\otimes B$, \eqref{nikolaus} uniquely defines $\sigma$.
\end{proof}

\begin{corollary}\begin{enumerate}[(i)] Let $A$ and $B$ be nuclear Fr\'echet locally convex algebras.
\item The map $A \otimes B \times A \otimes B \to A \otimes B$, $(a_1 \otimes b_1, a_2 \otimes b_2) \mapsto a_1 a_2 \otimes b_1b_2$ extends uniquely to a continuous bilinear map $A \widehat\otimes B \times A \widehat\otimes B \to A \widehat\otimes B$, turning $A \widehat\otimes B$ into a locally convex algebra.
\item Given a complete locally convex $A$-module $M$ and a complete locally convex $B$-module $N$ such that the multiplications $f$ of $M$ and $g$ of $N$ are hypocontinuous, the map $A \otimes B \times M \eps N \to M \eps N$, $(a \otimes b, u) \mapsto (f_a \eps g_b)(u)$ extends uniquely to a hypocontinuous bilinear mapping $A \widehat \otimes B \times M \eps N \to M \eps N$ turning $M \eps N$ into a locally convex $A \widehat\otimes B$-module with hypocontinuous multiplication.
\item In particular, $\eps$ is an additive functor
\[ \catLCModt{A}{\beta} \times \catLCModt{B}{\beta} \to \catLCModt{A \widehat\otimes B}{\beta}. \]
\end{enumerate}
\end{corollary}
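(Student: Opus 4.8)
The plan is to deduce all three parts from Lemma~\ref{innsbruck}, feeding it the relevant multiplication maps, and then to promote the resulting bilinear maps to genuine algebra, module, and functor structures by routine density arguments. Throughout I would use that for nuclear Fréchet spaces $A$ and $B$ one has $A \widehat\otimes B \cong A \eps B$, since both are complete and carry the approximation property, so that the target $A \eps B$ appearing in the lemma may be read as $A \widehat\otimes B$.

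For (i) I would apply Lemma~\ref{innsbruck} with $M_1 = M_2 = A$, $N_1 = N_2 = B$ and with $f, g$ the multiplications of $A$ and $B$; these are separately continuous, hence (both spaces being Fréchet, thus barrelled and Baire) jointly continuous and in particular hypocontinuous. The lemma then yields a hypocontinuous—and, as all spaces involved are Fréchet, jointly continuous—bilinear map $\sigma \colon A \widehat\otimes B \times A \widehat\otimes B \to A \widehat\otimes B$ which, by \eqref{nikolaus}, sends $(a_1 \otimes b_1, a_2 \otimes b_2)$ to $(f(a_1,\cdot) \eps g(b_1,\cdot))(a_2 \otimes b_2) = a_1 a_2 \otimes b_1 b_2$. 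Associativity, commutativity and the existence of the unit $1_A \otimes 1_B$ hold on the dense subspace $A \otimes B$ by the corresponding properties of $A$ and $B$, and I would extend them to $A \widehat\otimes B$ by separate continuity and density.

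For (ii) I would apply the lemma with $M_1 = M_2 = M$, $N_1 = N_2 = N$ and $f, g$ the module multiplications; their hypocontinuity together with the completeness of $M$ and $N$ are exactly the hypotheses of the lemma, which thus delivers the required hypocontinuous bilinear map $\sigma \colon A \widehat\otimes B \times M \eps N \to M \eps N$ restricting on $A \otimes B$ to $(a \otimes b, u) \mapsto (f_a \eps g_b)(u)$. To verify the module axioms I would invoke the multiplicativity of the $\eps$-product, $(\phi_1 \circ \phi_2)\eps(\psi_1 \circ \psi_2) = (\phi_1 \eps \psi_1)\circ(\phi_2 \eps \psi_2)$, together with $f_{a_1 a_2} = f_{a_1}\circ f_{a_2}$, $g_{b_1 b_2} = g_{b_1} \circ g_{b_2}$ and the product from (i): these give $\sigma(x_1 x_2, u) = \sigma(x_1, \sigma(x_2, u))$ first for $x_1, x_2 \in A \otimes B$, whence for all of $A \widehat\otimes B$ by continuity of $\sigma$ in its first variable and density, while $f_{1_A}\eps g_{1_B} = \id\eps\id = \id$ handles the unit.

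For (iii), on objects the module structure and its hypocontinuity are supplied by (ii), so $M \eps N$ lies in $\catLCModt{A \widehat\otimes B}{\beta}$ (this is where completeness of $M, N$ is used, and I would note that the modules relevant to our application are complete). On morphisms, for $\phi \in \cL_A(M, M')$ and $\psi \in \cL_B(N, N')$ the continuous map $\phi \eps \psi$ is $A \widehat\otimes B$-linear: on $A \otimes B$ the multiplicativity of $\eps$ together with $\phi \circ f_a = f'_a \circ \phi$ and $\psi \circ g_b = g'_b \circ \psi$ gives $(\phi \eps \psi)\circ(f_a \eps g_b) = (f'_a \eps g'_b)\circ(\phi \eps \psi)$, and density extends this to $A \widehat\otimes B$. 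Preservation of identities and composition, and additivity, are then inherited verbatim from the $\eps$-bifunctor on $\catLCS$, since biproducts in all these categories are formed in $\catLCS$. The one genuinely delicate point—more a matter of care than a conceptual obstacle—is the repeated passage from the dense subalgebra $A \otimes B$, where each identity is an explicit computation, to $A \widehat\otimes B$ via separate continuity and density; everything rests on having Lemma~\ref{innsbruck} for the analytic content and the multiplicativity $(\phi_1\circ\phi_2)\eps(\psi_1\circ\psi_2)=(\phi_1\eps\psi_1)\circ(\phi_2\eps\psi_2)$ of Schwartz' $\eps$-product to reduce the structural identities to purely algebraic ones in $A$, $B$, $M$ and $N$.
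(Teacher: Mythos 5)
Your proposal is correct and follows essentially the same route as the paper, whose entire proof reads: Lemma~\ref{innsbruck} supplies the necessary mappings, and the algebra and module axioms are verified by restricting to the dense subspace $A \otimes B$ of $A \widehat\otimes B$. Your additional details --- the identification $A \eps B \cong A \widehat\otimes B$ for nuclear Fr\'echet spaces, the use of the multiplicativity of the $\eps$-product to reduce the module axioms and the $A \widehat\otimes B$-linearity of $\phi \eps \psi$ to algebraic identities, and the remark on where completeness enters --- are exactly the routine verifications the paper leaves implicit.
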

\begin{proof}Lemma \ref{innsbruck} gives the necessary mappings. The axioms for the algebra and module structure are easily verified by restricting to the dense subspace $A \otimes B$ of $A \widehat\otimes B$.
\end{proof}

In order to apply the procedure of Section \ref{sec_reduction} we define functors $T,T' \colon \catVB_M \times \catVB_N \to \catLCModt{C^\infty(M \times N)}{\beta}$ by
\begin{align*}
 T & \coleq \eps \circ (\Gamma_c \times \Gamma) \circ \bigl( ( \textunderscore \otimes \Vol(M)) \times \id \bigr) \circ (\textunderscore^* \times \id), \\
 T' &\coleq \cL_{C^\infty(M \times N), \beta} \bigl( \textunderscore, \Gamma_c(M, \Vol(M)) \eps C^\infty(N)\bigr) \circ \widehat\otimes \circ (\Gamma \times \Gamma) \circ ( \id \times \textunderscore^*)
\end{align*}
Because they are compositions of additive functors, $T$ and $T'$ are additive. In order to define $\nu_{E,F} \colon T(E,F) \to T'(E,F)$ we apply Lemma \ref{innsbruck} to $A = \Gamma(M, E)$, $B = \Gamma(N, F^*)$, $M_1 = \Gamma_c(M, E^* \otimes \Vol(M))$, $N_1 = \Gamma(N, F)$, $M_2 = \Gamma_c(M, \Vol(M))$, $N_2 = C^\infty(N)$, $g \colon \Gamma(N, F^*) \times \Gamma(N, F) \to C^\infty(N)$ given by contraction and $f$ defined via
\[
 \xymatrix@C=7em{
 \Gamma(M, E) \times \Gamma_c\bigl(M, E^* \otimes \Vol(M)\bigr) \ar[r]^-f \ar[d]_{\id \times \psi^{-1}} & \Gamma_c\bigl(M, \Vol(M)\bigr) \\
 \Gamma(M, E) \times \Gamma(M, E^*) \widehat\otimes \Gamma_c\bigl(M, \Vol(M)\bigr) \ar[r]_-{(t,s) \mapsto (c_t \otimes \id)(s)} & C^\infty(M) \widehat\otimes \Gamma_c\bigl(M, \Vol(M)\bigr) \ar[u]_M
 }
\]
where $c_t \colon \Gamma(M, E^*) \to C^\infty(M)$ is contraction with $t \in \Gamma(M, E)$, which is a $C^\infty(M)$-linear continuous map, $\psi$ is the isomorphism \eqref{seciso} and
\[ M \colon C^\infty(M) \widehat\otimes \Gamma_c\bigl(M, \Vol(M)\bigr) \to \Gamma_c\bigl(M, \Vol(M)\bigr)\]
denotes the linear continous mapping canonically associated to the module multiplication of $\Gamma_c\bigl(M, \Vol(M)\bigr)$. Explicitly, $f$ is given by the map $f(t,s) \coleq M\bigl((c_t \otimes \id)(\psi^{-1}(s))\bigr)$ and obviously is hypocontinuous. This way, Lemma \ref{innsbruck} defines a mapping
\[ \sigma \colon \Gamma(M \times N, E \boxtimes F^*) \times \Gamma_c\bigl(M, E^* \otimes \Vol(M)\bigr) \eps \Gamma(N, F) \to \Gamma_c\bigl(M, \Vol(M)\bigr) \eps C^\infty(N) \]
which is hypocontinuous with respect to bounded subsets of $\Gamma(M \times N, E \boxtimes F^*)$, thus by \cite[\S 40.1 (3) a), p.~156]{Koethe2} induces a linear continuous mapping
\begin{multline*}
 \nu_{E,F} \colon \Gamma_c\bigl(M, E^* \otimes \Vol(M)\bigr) \eps \Gamma(N, F)\\
\to \cL_\beta \bigl( \Gamma(M \times N, E \boxtimes F^*), \Gamma_c(M, \Vol(M)) \eps C^\infty(N)\bigr)
\end{multline*}
by setting $(\nu u)(s) \coleq \sigma(s, u)$. It is easily verified that for $a \otimes b \in C^\infty(M) \otimes C^\infty(N)$ and $s \otimes t \in \Gamma(M, E) \otimes \Gamma(N, F^*)$, $(\nu u)((a \otimes b) \cdot (s \otimes t)) = (a \otimes b) \cdot ((\nu u)(s \otimes t))$ which implies that $\nu u$ is $C^\infty(M \times N$)-linear. The equality $\nu(h \cdot u)(v) = h \cdot \nu(u)(v)$ holds for all $h = a \otimes b$ and $v = s \otimes t$ as above, and hence also for $h \in C^\infty(M \times N)$ and $v \in \Gamma(M \times N, E \boxtimes F^*)$ because both sides are continuous in $h$ and $v$ separately; this means that $\nu$ itself is $C^\infty(M \times N)$-linear.

Summarizing, for each pair $E,F$ we have defined a $C^\infty(M \times N)$-linear continuous map $\nu_{E,F} \colon T(E,F) \to T'(E,F)$. Moreover, one easily verifies that $\nu$ is a natural transformation from $T$ to $T'$. It follows that in order for $\nu$ to be a natural isomorphism, it suffices to verify this for the case of trivial line bundles $E = M \times \bK$ and $F = N \times \bK$; but in this case one can immediately write down the inverse of $\nu$, namely $\nu^{-1} \ell \coleq \ell (1)$ with $1 \in C^\infty(M \times N)$. Together with Theorem \ref{thm_asdfasasdf} and \eqref{betaa} this establishes \eqref{iso2}.

\section{Conclusion}

The topological variant of isomorphisms \eqref{iso1} and \eqref{iso2} hence reads as follows:
\begin{gather*}
\cD'(M, E) \cong \Gamma(M, E) \otimes_{C^\infty(M), \beta} \cD'(M) \cong \cL_{C^\infty(M), \beta} \bigl( \Gamma(M, E^*), \cD'(M) \bigr) \\
\begin{aligned}
\cL_\beta \bigl( \cD'(M, E), \Gamma(N, F) \bigr) & \cong \Gamma ( M \times N, E^* \boxtimes F ) \otimes_{C^\infty(M \times N), \beta} \cL_\beta \bigl( \cD'(M), C^\infty(N) \bigr) \\
&\cong \cL_{C^\infty(M \times N), \beta} \bigl( \Gamma(M \times N, E \boxtimes F^*), \cL_\beta ( \cD'(M), C^\infty(N) ) \bigr).
\end{aligned}
\end{gather*}

\textbf{Acknowledgements. } This work was supported by the Austrian Science Fund (FWF) grant P26859-N25.

\end{document}